\theoremstyle{plain}
\newtheorem{thm}{Theorem}
\newtheorem{cor}[thm]{Corollary}
\newtheorem{lem}[thm]{Lemma}
\newtheorem{prop}[thm]{Proposition}
\theoremstyle{definition}
\newtheorem{defn}{Definition}
\theoremstyle{definition}
\title{Linear time algorithm for computing the rank of divisors on cactus graphs}
\author{
Phan Thi Ha Duong\footnote{Institute of Mathematics, Vietnam Academy of Science and Technology, Email: \textsf{phanhaduong@math.ac.vn}} 
}
\date{}
\begin{document}

\maketitle
\begin{abstract}
Rank of divisor on graph was introduced in 2007 and it quickly attracts many attentions. Recently in 2015, the problem of computing this quantity was proved to be NP-hard. In this paper, we describe a linear time algorithm for this problem limited on cactus graphs. 

\end{abstract}
\vspace{0.5cm}
%


\section{Introduction}
The notion of rank of divisor on graph was introduced by Baker and Norine in a paper on Jacobi-Abel theory on graph \cite{BN07}, in which the authors stated the link between this notion with similar notion on Riemann surface. Moreover, the authors have developed a theorem for divisor on graph analogue to the classical Riemann-Rich theorem. Since then, many works have studied for computing the rank of divisor on graph (see for example \cite{CLM15}). The most important result should be the new theorem on the NP-hardness complexity of  rank of divisor problem on general graph \cite{KT15}. The proof of this result was based on the proof of NP-hardness of minimum recurrent configuration problem of Chip Firing Game on directed graphs studied by Perrot and Pham \cite{PP15}. On the other hand, the rank of divisor problem can be studied in special classes of graphs. In \cite{CL14}, the author proposed a linear time algorithm for this problem on complete graph. The idee of this algorithm is based on Dyck words and parking function, notions very closed to Chip Firing Game, a very well-known combinatorial model \cite{BLS91, GLMMP01,  PP13}

In this paper we investigate this problem in the case of cactus graph. This class was introduced in 1950's year \cite{FG53}, and can be used for representing model on different research domains, for example electrical circuits \cite{Tetsuo91,Yu83} or comparative genomics \cite{PDESMSH}. Several NP-hardness problem on general graphs can be solved in polynomial time on cactus graphs \cite{Das10, Hobbs79, ZZ03}.

Our main idea is to contract a graph by eliminating edge and cycle, and deduce the rank of divisor on initial graph from that of contracted graph. For a general graph, such an elimination does not always exist, but it is the case for cactus graph. We show that a block (edge or cycle) elimination scheme can be found in linear time for cactus graph, furthermore from this scheme, we construct an algorithm in linear time for computing the rank of divisors. 

In Section 2, we will present the key features of the theory of Riemann Roch on graph. Then we discuss about the rank of divisors on trees and cycles. We propose the contraction operator on graph by eliminating edge or cycle, and take in evidence the relation between the rank of divisors on initial graph and that on its contraction.

Section 3 focuses on cactus graphs, on the construction of a block elimination scheme, and from there a linear time algorithm for computing the rank of divisors.

\section{Divisors on graphs and Riemann-Roch like theorem}

Let $G$ be a multiple undirected graph that has no loop. We always denote by $V(G)$ the vertex set of $G$ and by $n$ its cardinality, by $E(G)$ the edge set of $G$ and by $m$ its cardinality.  For each vertex $v \in V(G)$, we write $deg(v)$ the degree of $v$, and for every vertices $u, v \in V(G)$, we write $e(u,v)$ the number of edges between $u$ and $v$. The \emph{genus} $g$ of $G$ is the quantity $g=m-n+1$. For a subset $U$ of $V$, we denote by $G(U)$ the subgraph of $G$, induced by $U$.

The group of divisors of $G$, $Div(G)$ is the free abelian group on $V(G)$. A divisor $f \in Div(G)$ can be considered as a function $f: V \rightarrow \mathbb{Z}$, or as a vector $f \in \mathbb{Z}^{V(G)}$, where the coordinates are indexed by the vertices of $G$. The degree of $f$ is defined by $deg(f) =\sum_{v \in V(G)}f(v)$. The index vector $\epsilon_v$ is defined by a vector of entries $0$ except $\epsilon_v(v) = 1$.


The Laplacian matrix $(\Delta_G)_{n\times n}$ of graph $G$,  where the coordinates are indexed by $V(G) \times V(G)$, is defined by:
$$\Delta_G(u,v) = \left\{\begin{array}{ll}
deg(u) & \mbox{ if } u=v,\\
-e(u,v)&  \mbox{ if } u \neq v.
\end{array}\right.
$$

We write $\Delta_G(v)$ the vector indexed by vertex $v$ of the matrix.

A divisor $f \in Div(G)$ is called \emph{effective} if $f(v) \geq 0$ for all $v \in V$.

The \emph{linear equivalence} is a  relation on $Div(G)$ defined by: $f \sim g$ if there exists $x \in \mathbb{Z}^{V(G)}$ such that $g = f + x \Delta_G $. If $f$ is linear equivalent with an effective divisor $g$, we say $f$ is $L$\emph{-effective}.

We give here the definition of the rank of divisor which was introduced by Baker and Norine \cite{BN07}.
\begin{defn}
	For a divisor $f \in Div(G)$, the \emph{rank} of $f$ is 
	\begin{itemize}
		\item $-1$  if $f$ is not effective,
		\item the largest integer $r$  such that for any effective configuration $\lambda$ of degree $r$ the divisor $f - \lambda $  is L-effective.
	\end{itemize}
\end{defn}
It is useful to state a straightforward property of the rank. 
\begin{lem}
	Let $f$ and $f'$ be two divisors of degree non negative on $G$. Then $\rho(f+f') \leq \rho(f) + deg(f')$. In particular, for every $v \in V(G)$, we have $\rho(f) - 1 \leq \rho(f-\epsilon_v) \leq \rho(f)$.  
\end{lem}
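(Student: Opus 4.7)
The plan is to prove the main inequality directly from the definition of rank and then derive the ``in particular'' assertion by two separate arguments, only one of which uses the main inequality itself.

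For the main inequality, I first use that $\rho$ is invariant under linear equivalence to replace $f'$ by a linearly equivalent effective divisor; this costs nothing and lets me treat $f'$ as effective throughout. Writing $r := \rho(f+f')$, the goal is $\rho(f) \geq r - \deg(f')$. If $r < \deg(f')$ (in particular if $r=-1$), then $r-\deg(f') \leq -1 \leq \rho(f)$ and there is nothing to prove. The essential case is $r \geq \deg(f')$, where I must show that for every effective divisor $\mu$ of degree $s := r - \deg(f')$ the divisor $f - \mu$ is $L$-effective. The key choice is to set $\lambda := \mu + f'$: it is effective because both summands are, and $\deg(\lambda) = r$, so the hypothesis $\rho(f+f') = r$ yields that $(f+f') - \lambda = f - \mu$ is $L$-effective, as required.

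For the ``in particular'' part, the lower bound $\rho(f) - 1 \leq \rho(f - \epsilon_v)$ is obtained by applying the main inequality to the pair $(f - \epsilon_v, \epsilon_v)$ and using $\deg(\epsilon_v) = 1$. The upper bound $\rho(f - \epsilon_v) \leq \rho(f)$ is not a consequence of the main inequality; instead it is a short monotonicity remark, namely that adding an effective divisor cannot decrease the rank. Concretely, if $\lambda$ is effective of degree $\rho(f - \epsilon_v)$, there is an effective $g$ with $(f-\epsilon_v) - \lambda \sim g$, and then $f - \lambda \sim g + \epsilon_v$ is also effective, so $f - \lambda$ is $L$-effective; this gives $\rho(f) \geq \rho(f - \epsilon_v)$.

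I do not anticipate a real obstacle: the only delicate point is identifying the right $\lambda$ in the essential case of the main inequality, namely the decomposition $\lambda = \mu + f'$ that lets the rank hypothesis on $f+f'$ feed directly back into a statement about $f - \mu$. Everything else is definitional bookkeeping together with the invariance of $\rho$ under linear equivalence.
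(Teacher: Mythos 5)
The paper gives no proof of this lemma at all (it is introduced only as a ``straightforward property''), so there is nothing to compare against line by line. Your argument is the standard one and its core is exactly right: in the essential case you test $f+f'$ against $\lambda=\mu+f'$ for an arbitrary effective $\mu$ of degree $r-\deg(f')$, and the two halves of the ``in particular'' statement are correctly handled by two different mechanisms (the lower bound from the main inequality applied to $(f-\epsilon_v,\epsilon_v)$, the upper bound from the monotonicity remark that adding $\epsilon_v$ to an $L$-effective divisor keeps it $L$-effective). You are also right that the upper bound is \emph{not} an instance of the main inequality; that is a point worth making explicit.

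There is, however, one genuine gap, and it sits in your opening move: you ``replace $f'$ by a linearly equivalent effective divisor.'' A divisor of non-negative degree need not be $L$-effective, so such a replacement may not exist; without it, $\lambda=\mu+f'$ need not be effective and cannot serve as a test divisor in the definition of $\rho(f+f')$. This is not a cosmetic issue: the lemma as literally stated is false. On the cycle $C_3$ take $f=(1,-1,0)$ and $f'=-f=(-1,1,0)$; both have degree $0$ and both are bad by the good-divisor criterion for cycles (since $1+2\cdot(-1)\not\equiv 0$ and $-1+2\cdot 1\not\equiv 0 \pmod 3$), so $\rho(f)=-1$ and $\deg(f')=0$, yet $f+f'=0$ has rank $0$, which exceeds $\rho(f)+\deg(f')=-1$. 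The hypothesis your proof actually needs, and the one under which it becomes complete, is that $f'$ be effective (or at least $L$-effective); this is also the only way the lemma is used in the paper, namely with $f'$ equal to $\epsilon_v$ or a sum of such vectors. You should state that assumption rather than attempt to derive it from ``degree non-negative.'' A last, minor point: when you apply the main inequality to the pair $(f-\epsilon_v,\epsilon_v)$, the first entry can have degree $-1$ when $\deg(f)=0$; your proof never uses non-negativity of the degree of the first argument, so this is harmless, but it deserves a sentence since the statement you are invoking formally requires it.
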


In their first paper on the rank of divisor, Baker and Norine have proved the following theorem which is analogue to the Riemann Roch theorem on Riemann surface.

\begin{thm}
Let $G$ be a graph with $n$ vertices and $m$ edges. Let $\kappa$ be the divisor such that $\kappa(v) = d(v)  - 2$ for all $v \in V(G)$, so that
$deg(\kappa) = 2(m - n)$. Then any divisor $f$ satisfies:
$$\rho(f) - \rho(\kappa - f)  = deg(f) -g+1,$$
where $g$ being the genus of $G$.
\end{thm}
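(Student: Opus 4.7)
The plan is to follow Baker and Norine's original strategy, which derives the symmetric Riemann-Roch identity from two asymmetric ingredients: a Riemann-type inequality in high degree, and a duality in degree $g-1$ provided by acyclic orientations.

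First, I would prove Riemann's inequality, namely that every divisor $f$ with $\deg(f) \geq g$ is L-effective. The natural tool is the notion of $q$-reduced divisor at a chosen base vertex $q \in V(G)$, equivalently a $G$-parking function at $q$ shifted by a multiple of $\epsilon_q$. Using Dhar's burning algorithm, one shows that every linear equivalence class contains a unique $q$-reduced representative, and that such a representative can fail to be effective only when its coefficient at $q$ is negative. A degree count then forces any non-L-effective $q$-reduced divisor to have degree at most $g-1$, so $\deg(f) \geq g$ implies that $f$ is L-effective.

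Second, I would construct a distinguished family of non-L-effective divisors of degree exactly $g-1$, indexed by acyclic orientations of $G$ having a unique source. For such an orientation $O$ with source $q$, put $D_O(v) = \mathrm{indeg}_O(v) - 1$; then $\deg(D_O) = m - n = g - 1$, and Dhar's algorithm at $q$ shows that $D_O$ is $q$-reduced and non-L-effective. Reversing all edges gives another acyclic orientation $\bar O$ satisfying $D_O + D_{\bar O} = \kappa$; this symmetry is exactly what will match $f$ with $\kappa - f$ in the final identity. The key combinatorial lemma to establish is the trichotomy: every divisor of degree $g-1$ is linearly equivalent either to an effective divisor or to some $D_O$ arising in this way.

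With these ingredients in hand, Riemann-Roch follows by expressing each rank as a count of obstructions: for fixed $f$, $\rho(f) + 1$ is the minimum degree of an effective $\lambda$ such that $f - \lambda$ is not L-effective, and the trichotomy reduces this to counting suitable $D_O$'s sitting below $f$. The involution $O \mapsto \bar O$ then converts the count for $\rho(f)$ into the count for $\rho(\kappa - f)$ with a shift of exactly $\deg(f) - g + 1$. The main obstacle I anticipate is the trichotomy itself: showing that every non-L-effective $q$-reduced divisor of degree $g-1$ actually arises as some $D_O$ requires a precise identification between superstable configurations at $q$ and indegree sequences of acyclic orientations with unique source $q$, and this combinatorial matching is the heart of the proof; once it is in place, the rest of the argument is essentially a bookkeeping exercise powered by the edge-reversal involution.
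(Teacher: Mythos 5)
The paper offers no proof of this statement: it is Baker--Norine's Riemann--Roch theorem for graphs, quoted verbatim from \cite{BN07} as background, so there is nothing internal to compare your argument against. Judged on its own terms, your outline faithfully reconstructs the original Baker--Norine strategy (Riemann's inequality via $q$-reduced divisors and Dhar's algorithm; the family $\mathcal{N}$ of non-special divisors $D_O$ of degree $g-1$ coming from acyclic orientations; the dichotomy for divisors of degree $g-1$; the rank formula $\rho(f)+1=\min \deg^+(f'-\nu)$ over $f'\sim f$ and $\nu\in\mathcal{N}$; and the bookkeeping identity $\deg^+(D)-\deg^+(-D)=\deg(D)$ combined with edge reversal), and as a plan it is correct. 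One technical point deserves care: reversing all edges of an acyclic orientation with a unique source generally yields an acyclic orientation with a unique \emph{sink}, not a unique source (a star oriented outward from its center already shows this), so the involution $O\mapsto\bar O$ does not act on the family you defined. You must either enlarge $\mathcal{N}$ to all acyclic orientations (Baker and Norine do this by indexing $\nu$ by linear orders on $V$, at the cost of proving non-effectivity of $D_O$ without the unique-source hypothesis), or else prove the supplementary lemma that every $D_O$ is linearly equivalent to $D_{O'}$ for some acyclic $O'$ with unique source $q$ (e.g.\ by reversing directed cuts, which changes $D_O$ by a principal divisor). With that repair, and the superstable-versus-acyclic-orientation matching you already identify as the heart of the matter, the argument goes through.
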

Let us remark that for any divisor $f$ such that $deg(f)<0$ then $f$ is not effective, and $\rho(f) = -1$. Moreover $deg(\kappa -f) = deg(\kappa) - deg (f) = 2(m-n) - deg(f)$. Then if $deg(f) > 2(m-n)$, we have $deg(\kappa-f)<0$ and $\rho(f)=-1$, this implies that $\rho(f) = deg(f)-g$.
\subsection{Rank on trees and cycles}
We now investigate to some elementary cases of graphs. 
\subsubsection{Tree}
A tree is an acyclic connected graph. In a tree, we have $m=n-1$, and $2(m-n)=-2$. So for every $f$ of degree non negative, we have $deg(f) > deg(\kappa)$ which implies that $\rho(f) =deg(f)$.
\subsubsection{Cycle}
A cycle is a connected graph where every vertex are of degree 2. In a cycle of $n$ vertices $C_n=\{v_1,\ldots, v_n\}$, we have $m=n$ and $g=1$. So for every $f$ of degree positive, we have $deg(f) > deg(\kappa)$ which implies that $\rho(f) =deg(f)-1$.

In the case $deg(f)=0$, $\rho(f)=0$ if $f \sim 0$ (that means $f$ is $L$-effective), otherwise $\rho(f)=-1$. We call {\em good divisor} a divisor of degree $0$ and L-effective, and {\em bad divisor} a divisor of degree $0$ and not L-effective. For cycle $C_n$, we can write a divisor $f$ on $C_n$ as a vector $f=(f_1, f_2, \ldots, f_n)$. 

\begin{prop}
	Let $f=(f_1, f_2, \ldots, f_n)$ be a divisor on cycle $C_n$, then the rank of $f$ is computed as follows.
	$$\rho(f) = \left\{\begin{array}{llll}
	-1 & \mbox{ if } deg(f) \leq -1,\\
	-1 &  \mbox{ if } deg(f) =0 \mbox{ and } f \mbox { is bad},\\
	0 &  \mbox{ if } deg(f) =0 \mbox{ and } f \mbox{ is good },\\
	deg(f)-1 & \mbox{ if } deg(f) \geq 1.\\
	\end{array}\right.
	$$
\end{prop}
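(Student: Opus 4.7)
The plan is to verify the four listed cases separately, reusing the material already assembled in the excerpt (the definition of rank, the preceding lemma, and the Riemann--Roch identity).

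First I would handle $\deg(f)\le -1$. The key observation is that linear equivalence preserves degree, because each row of $\Delta_G$ sums to zero, so $\deg(f+x\Delta_G)=\deg(f)$ for every $x\in\mathbb{Z}^{V(G)}$. Hence no divisor equivalent to $f$ is effective, $f$ is not $L$-effective, and $\rho(f)=-1$ by definition. This short argument also gives the auxiliary fact that every divisor of negative degree has rank $-1$, which is used in the remaining cases.

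Next I would treat $\deg(f)=0$, splitting on good/bad. If $f$ is bad then $f$ is not $L$-effective, so $\rho(f)=-1$ straight from the definition. If $f$ is good then $f$ is $L$-effective, so $\rho(f)\ge 0$. For the matching upper bound I would pick any vertex $v$; the divisor $f-\epsilon_v$ has degree $-1$ and so, by the previous paragraph, $\rho(f-\epsilon_v)=-1$. The lemma $\rho(f)\le \rho(f-\epsilon_v)+1$ then forces $\rho(f)\le 0$, and combining gives $\rho(f)=0$.

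Finally, for $\deg(f)\ge 1$, I would invoke Riemann--Roch. On a cycle $C_n$ we have $m=n$, hence $g=1$ and $\deg(\kappa)=2(m-n)=0$. Therefore $\deg(\kappa-f)=-\deg(f)\le -1$, and by the first case $\rho(\kappa-f)=-1$. Substituting into the Riemann--Roch identity $\rho(f)-\rho(\kappa-f)=\deg(f)-g+1$ immediately yields $\rho(f)=\deg(f)-1$, which matches the statement.

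I do not anticipate a serious obstacle: the proposition is essentially a tabulation of consequences of results already in hand. The only slightly delicate moment is the upper bound $\rho(f)\le 0$ in the good-divisor case, where one must remember to use the lemma together with the observation that any single-chip subtraction sends us into negative degree. Everything else reduces to direct substitution into Riemann--Roch or to unpacking the definition of $L$-effectivity.
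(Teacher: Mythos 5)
Your proof is correct and follows essentially the same route as the paper: the negative-degree case from degree invariance under linear equivalence, the degree-zero cases from the definition of $L$-effectivity (with the lemma supplying the upper bound $\rho(f)\le 0$), and the positive-degree case by substituting $\rho(\kappa-f)=-1$ into Riemann--Roch with $g=1$ and $\deg(\kappa)=0$. The paper states these facts more tersely in the surrounding text, but the underlying argument is identical.
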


Now, we analyze the characterization of good divisors on cycles.


Let $f=(f_1, f_2, \ldots, f_n)$ be a divisor on cycle $C_n$. We have $f \sim 0$ if and only if there exists $x=(x_1, x_2, \ldots, x_n) \in \mathbb{Z}_n$ such that $f - x \Delta_{C_n} = 0$. Because $\sum_{i=1}^{i=n} \Delta_{C_n}(v_i) = 0$ then we have 
$$f \sim 0 \Leftrightarrow \exists x=(x_1, x_2, \ldots, x_{n-1}, 0) \in \mathbb{Z}_{n-1} \times \{0\}:   f - x \Delta_{C_n} = 0.$$
$$ \Leftrightarrow  \exists x: (f_1, f_2, \ldots, f_n)  = (x_1, x_2, \ldots x_{n-1},0)  
\left( \begin{array}{ccccccc} 
2 & -1 &0 &\ldots &0&0 & -1  \\
-1& 2 & -1 & \ldots &0&0 & 0\\
...\\
0 & 0 & 0 &\ldots &-1&2&-1 \\
-1 & 0 & 0 &\ldots &0&-1&2 \\
\end{array} \right)
 $$
$$ \Leftrightarrow  \exists x: 
\left\{\begin{array}{lll}
            f_1 & = & 2x_1 - x_2\\
            f_2 & = & - x_1 + 2x_2 - x_3 \\
            f_3 & = & - x_2 + 2x_3 - x_4 \\
            \ldots\\
	     f_{n-1} & = & - x_{n-2} + 2x_{n-1}\\
	     f_n & = & 2x_{n-1} - x_1\\ 	
            \end{array}\right.
$$
$$ \Leftrightarrow  \exists x: 
\left\{\begin{array}{lllll}
            x_2 & = & 2x_1 - f_1\\
            x_3 & = & 3 x_1 -(2f_1+f_2) \\
            x_4 & = & 4 x_1 -(3f_1+2f_2+f_3) \\
            \ldots\\
	     x_{n-1} & = & (n-1)x_1- ((n-2)f_1+\ldots +2f_{n-3} + f_{n-2})\\
	     0 = x_n & = & nx_1 -((n-1)f_1 + (n-2) f_2 + \ldots + 2f_{n-2} +f_{n-1})\\ 	
            \end{array}\right.
$$
$$ \Leftrightarrow  ((n-1)f_1 + (n-2) f_2 + \ldots + 2f_{n-2} +f_{n-1}) \equiv 0 \mod{n}$$
$$ \Leftrightarrow  (f_1 + 2 f_2 + \ldots + (n-2)f_{n-2} + (n-1)f_{n-1}) \equiv 0 \mod{n}.$$


So we have the following result.
\begin{prop}
	Let $f=(f_1, f_2, \ldots, f_n)$ be a divisor of degree $0$ on the cycle $C_n$, then $f$ is good if and only if  
$$f_1 + 2 f_2 + \ldots + (n-2)f_{n-2} + (n-1)f_{n-1} \equiv 0 \mod{n}.$$
\label{Prop: Good}
\end{prop}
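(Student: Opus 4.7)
The plan is to unwind the definition of ``good'' directly. By definition, $f$ is good precisely when $f \sim 0$, i.e., when there exists $x \in \mathbb{Z}^{V(C_n)}$ with $f = x\,\Delta_{C_n}$. The first step is to observe that the all-ones vector lies in the left kernel of $\Delta_{C_n}$ (the rows and columns sum to zero), so any candidate $x$ can be translated by an integer multiple of $\mathbf{1}$ without changing $x\,\Delta_{C_n}$. Hence we may normalize $x_n = 0$, reducing the question to solvability over $\mathbb{Z}$ of the explicit linear system displayed in the excerpt.

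The second step is to solve this system sequentially. The equation $f_1 = 2x_1 - x_2$ determines $x_2$ in terms of $x_1$; the equation for $f_2$ determines $x_3$; and so on. A straightforward induction on $k$ establishes the closed form
$$x_{k+1} = (k+1)\,x_1 - \sum_{i=1}^{k}(k+1-i)\,f_i, \qquad 1 \leq k \leq n-2.$$
Formally extending the recurrence one more step and imposing $x_n = 0$ produces the single integer equation
$$n\,x_1 = (n-1)f_1 + (n-2)f_2 + \cdots + f_{n-1}.$$
Existence of $x_1 \in \mathbb{Z}$ is then equivalent to $\sum_{i=1}^{n-1}(n-i)\,f_i \equiv 0 \pmod{n}$. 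Using $\deg(f) = 0$, one checks that the remaining cyclic equation (the one for $f_n$) is then automatically satisfied, so nothing further is imposed.

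Finally, to match the form stated in the proposition, I would reduce the coefficients modulo $n$: since $n - i \equiv -i \pmod{n}$, the congruence $\sum_{i=1}^{n-1}(n-i)\,f_i \equiv 0 \pmod{n}$ is equivalent to $\sum_{i=1}^{n-1} i\,f_i \equiv 0 \pmod{n}$, which is precisely the condition in the statement. There is no serious obstacle here; the whole argument is a linear-algebraic unwinding. The step that deserves most care is the induction producing the closed form for $x_{k+1}$, where signs and indices must be tracked precisely; once that formula is in hand, both the existence of $x_1$ and the final rewriting modulo $n$ are routine.
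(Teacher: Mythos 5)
Your proposal is correct and follows essentially the same route as the paper: normalize $x_n=0$ using the kernel vector $\mathbf{1}$, solve the system sequentially to get the closed form for $x_{k+1}$, impose $x_n=0$ to obtain the congruence $\sum_{i=1}^{n-1}(n-i)f_i\equiv 0 \pmod n$, and reduce coefficients modulo $n$. Your explicit remark that the remaining cyclic equation for $f_n$ is automatically satisfied because $\deg(f)=0$ is a small point the paper leaves implicit, but otherwise the two arguments coincide.
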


\subsection{Operators and rank of divisors}

The two simple cases of trees and cycles give us the idea to decompose a graph to smaller graphs in a way that the rank of the initial graph can be deduced from that of smaller graphs.

To this purpose, we introduce two operators on graph and on its divisors.

\begin{defn}
	Let $G$ be a connected graph. A vertex $v$ of $G$ is called a \emph{cut vertex} if removing $v$ from $G$ disconnects $G$. Moreover, if one can decompose $V(G)=V_1 \cup U$ such that $V_1 \cap U = \{v\}$ and that the induced graphs $G_1=G(V_1)$ and $H=G(U)$ are connected, we say $v$ decomposes $G$ into $G_1$ and $H$. We will denote by $G /H$, and call {\em the contraction} of $G$ by $H$ at vertex $v$,  the subgraph $G_1$.
	
	Furthermore, if $H$ is a block (maximal subgraph without a cut-vertex) we say $v$ a {\em block cut vertex} and $H$ a {\em free block} of $G$. 
\end{defn}	

\begin{defn} Let $G$ be a graph, and let $v$ be a cut vertex which decomposes $G$ to $G_1=G(V_1)$ and $H=G(U)$. Let $f$ be a divisor on $G$, we define {\em contraction} of $f$ by $H$, and denote by $f_{G / H}$, the following divisor on $G/H$.
	
	$$f_{G/H}(u) = \left\{\begin{array}{lll}
	f(u) & \mbox{ if } u \in V_1 \backslash\{v\}, \\
	\sum_{u \in H} f(u) &  \mbox{ if } u=v. 
	\end{array}\right.
	$$
	
	We define {\em zero} of $f$ on $H$, and denote by $f_{N(H)}$, the following divisor on $H$.
	
	$$f_{N(H)}(u) = \left\{\begin{array}{lll}
	f(u) & \mbox{ if } u \in H \backslash\{v\}, \\
	- \sum_{u \in H \backslash\{v\}} f(u) &  \mbox{ if } u=v. 
	\end{array}\right.
	$$
\end{defn}
One has directly the relation between a divisor and its contraction and zero.
$$\left\{\begin{array}{lll}
f_G = f_{G/H} + f_{N(H)},\\
deg(f_{G/H}) = deg(f_G),\\
deg(f_{N(H)})=0. 
\end{array}\right.
$$
Generally, let $U$ be a subset of $V(G)$ and let $H=G(U)$, we can consider a divisor on $H$ as a divisor on $G$ by giving value $0$ to all vertices in $V(G) \backslash U$. Similarly, we consider the matrix indexed by vertices of $H$ as a matrix indexed by vertices of $G$ by giving value $0$ to all entries indexed by vertices in $V(G) \backslash U$.

It is easy to check that $\Delta_{G
/H} + \Delta_{H} = \Delta_G$.

Nevertheless, the rank of a divisor on $G$ and on $H$ are not the same, that means if $f$ is a divisor on $H$ then $f$ can be seen as a divisor on $G$ but $\rho_G(f) \neq \rho_H(f)$. 

Now we show that the rank of a divisor can be computed from that of its contraction. 

\begin{prop}
	Let $G$ be a graph and let $v$ be a cut vertex which decomposes $G$ to $H$ and $G_1$. If $H$ is a tree then for all divisor $f$ on $G$, we have $\rho(f) = \rho(f_{G/H})$.
\end{prop}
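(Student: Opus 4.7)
The plan is to reduce the proposition to the following intermediate claim, which I will call the \emph{transfer lemma}: for every divisor $g$ on $G$, the divisor $g$ is $L$-effective on $G$ if and only if $g_{G/H}$ is $L$-effective on $G/H$. Granted this lemma, the proposition is quick. Every effective $\lambda$ on $G$ of degree $r$ satisfies $(f-\lambda)_{G/H} = f_{G/H} - \lambda_{G/H}$, where $\lambda_{G/H}$ is effective on $G/H$ of degree $r$; conversely, every effective $\mu$ on $G/H$ of degree $r$ lifts to an effective $\lambda$ on $G$ by extending by zero on $U\setminus\{v\}$. Applied to $g = f-\lambda$, the transfer lemma shows that the set of $r$ for which the $L$-effectivity condition in the definition of rank holds uniformly on $G$ equals that for $G/H$, and the non-$L$-effective case (rank $-1$) is also covered by the lemma.

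For the forward direction of the transfer lemma, write $g' = g + x\Delta_G$ with $g'$ effective and $x \in \Z^{V(G)}$, and let $y$ be the restriction of $x$ to $V_1$. Using the identity $\Delta_G = \Delta_{G_1} + \Delta_H$ (as matrices indexed by $V(G)$) together with the crucial observation that every $u \in U\setminus\{v\}$ has all its $G$-neighbors in $U$, a direct calculation gives $g'_{G/H} = g_{G/H} + y\Delta_{G/H}$. The divisor $g'_{G/H}$ is effective because $g'_{G/H}(v) = \sum_{u \in U} g'(u) \geq 0$, so $g_{G/H}$ is $L$-effective on $G/H$.

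For the reverse direction, suppose $h = g_{G/H} + y\Delta_{G_1}$ is effective on $G_1$ for some $y \in \Z^{V_1}$. Extend $y$ by zero to $\tilde y \in \Z^{V(G)}$ and consider $g + \tilde y\,\Delta_G$. On $V_1\setminus\{v\}$ this coincides with $h$, hence is non-negative. Denoting its restriction to $V(H)$ by $\tilde g$, a short computation shows $\deg \tilde g = h(v) \geq 0$, the contribution $y(v)\deg_H(v)$ from firing at $v$ being exactly cancelled when summed over $V(H)$. Now invoke the tree hypothesis: since $H$ is a tree, any divisor of non-negative degree on $H$ is $L$-effective on $H$ (as recorded in the tree subsection), so there is $z' \in \Z^U$ with $\tilde g + z'\Delta_H$ effective. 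Using $\mathbf{1}\,\Delta_H = 0$, normalize $z'(v)=0$, and extend $z'$ by zero outside $U$. Since $z'$ is then supported on $U \setminus \{v\}$, whose vertices have all $G$-neighbors in $U$, firing $z'$ in $G$ coincides with firing $z'$ in $H$ and leaves $V_1\setminus\{v\}$ untouched. Therefore $g + (\tilde y + z')\Delta_G$ is effective on all of $G$.

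The main obstacle is the bookkeeping at the cut vertex $v$: one must track carefully how a firing supported on $V_1$ perturbs $V(H)$ through the edges incident to $v$, and dually how firings in $U\setminus\{v\}$ interact with $v$. The decomposition $\Delta_G = \Delta_{G_1} + \Delta_H$ combined with the fact that non-cut vertices of $H$ (respectively $G_1$) have all their $G$-neighbors on their own side is what makes these cross-terms manageable; once this is set up cleanly, the rest is either formal or a direct application of the tree subsection.
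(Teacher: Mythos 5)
Your proposal is correct, and its core idea coincides with the paper's: split a divisor across the cut vertex $v$ and use the fact that a divisor of non-negative degree on a tree is $L$-effective. The difference is one of rigor and organization rather than of strategy. The paper's proof is a three-line argument carried out entirely inside $G$: it writes $f-\lambda = (f-\lambda)_{G/H} + (f-\lambda)_{N(H)}$, observes that the second summand has degree $0$ on the tree $H$ and is therefore $L_G$-effective, and concludes. This silently identifies $L$-effectivity of $f_{G/H}-\lambda_{G/H}$ \emph{on the quotient graph} $G/H$ (which is what $\rho(f_{G/H})$ is defined through) with $L_G$-effectivity of the corresponding divisor viewed on $G$, and it does not justify the ``only if'' direction of its final equivalence. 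Your transfer lemma is precisely the missing bridge: by checking, via $\Delta_G=\Delta_{G_1}+\Delta_H$ and the fact that non-cut vertices of each side have all their neighbours on that side, that $(g+x\Delta_G)_{G/H}=g_{G/H}+y\Delta_{G/H}$ and conversely that a firing on $G_1$ extends (after clearing the $H$-part with a firing supported on $U\setminus\{v\}$, normalized so $z'(v)=0$) to a firing on $G$, you obtain both directions cleanly and also handle the rank $-1$ case and the matching of test divisors $\lambda\leftrightarrow\mu$ between $G$ and $G/H$. So: same decomposition and same use of the tree hypothesis, but your version supplies the Laplacian bookkeeping at $v$ that the paper's proof takes for granted; this buys a complete argument at the cost of length, and the lemma is reusable for the subsequent cycle-contraction propositions.
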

\begin{proof}
	Let $r$ be the rank of $\rho(f_{G/H})$, we will prove that $\rho(f) =r$.
	Let $\lambda$ be a divisor on $G$, we have:
	$$f-\lambda = (f-\lambda)_{G/H} + (f-\lambda)_{H}.$$ 
	Because $(f-\lambda)_{H}$ is of degree $0$ on a tree then it is  $L_G$-effective.
	This implies that $f-\lambda$ is $L_G$-effective if and only if  $(f-\lambda)_{G/H}$ is $L_G$-effective, so $\rho(f) = \rho(f_{G/H})$.
\end{proof}

From the above result, we observe that one can contract a graph by its tree and the rank of a divisor does not change after this contraction. After that, the result graph has no vertex of degree 1. It turns out that we need to focus only connected graphs whole every vertices are of degree at least two.

The situation will be more complicated for contraction by a cycle because a divisor of degree $0$ on a cycle can be good or bad. 

\begin{prop}
	Let $G$ be a graph and let $v$ be a cut vertex which decomposes $G$ into $H$ and $G_1$ where $H$ is a cycle. Let $f$ be a divisor on $G$. If $f_{N(H)}$ is bad then $\rho(f) = \rho(f_{G/H}- \epsilon_v)$.
	\label{Prop: bad cycle}
\end{prop}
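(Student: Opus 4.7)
The plan is to reduce $L_G$-effectiveness of any divisor on $G$ to a conjunction of $L$-effectiveness conditions on its two pieces $f_{G/H}$ and $f_{N(H)}$, and then specialize using the bad-cycle hypothesis. The real work is a decomposition lemma that lets one trade chips at $v$ between the two sides of the cut; once it is in hand, the rank equality is a short bookkeeping argument with the definition of rank and the cycle rank formula already established.

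\textbf{Step 1 (Decomposition characterization).} I first prove an auxiliary lemma: a divisor $f$ on $G$ is $L_G$-effective if and only if there exists an integer $k \geq 0$ such that $f_{N(H)} + k\epsilon_v$ is $L_H$-effective on $H$ and $f_{G/H} - k\epsilon_v$ is $L_{G_1}$-effective on $G_1$. The direction $(\Rightarrow)$ uses the identity $\Delta_{G/H} + \Delta_H = \Delta_G$ together with the fact that $v$ is a cut vertex: any firing vector $x \in \mathbb{Z}^{V(G)}$ splits so that $x\Delta_G$ decomposes as a $G_1$-firing plus an $H$-firing, the two acting independently off $v$ (edges across the cut all pass through $v$). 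Writing the supposed effective divisor $g = f + x\Delta_G$ as $\tilde g_1 + \tilde g_2$ with $\tilde g_1 \sim_{G_1} f_{G/H}$ and $\tilde g_2 \sim_H f_{N(H)}$, non-negativity at every vertex except $v$ forces $\tilde g_i \geq 0$ off $v$; at $v$ we need $\tilde g_1(v) + \tilde g_2(v) \geq 0$. Since $\deg(\tilde g_2) = 0$ with $\tilde g_2 \geq 0$ off $v$, we get $\tilde g_2(v) \leq 0$, and $k := -\tilde g_2(v)$ is a non-negative integer witnessing both required $L$-effectiveness statements. The converse $(\Leftarrow)$ is a direct gluing of effective witnesses on each side.

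\textbf{Step 2 (Collapsing $k$ via badness).} On the cycle $H$ with $f_{N(H)}$ bad, Proposition \ref{Prop: Good} (together with the preceding cycle rank formula) gives that $f_{N(H)} + k\epsilon_v$ is $L_H$-effective iff $k \geq 1$: at $k = 0$ it equals the bad divisor $f_{N(H)}$; for $k \geq 1$ it has degree $\geq 1$ on a cycle, hence rank $\geq 0$, hence is $L_H$-effective. So the existential of Step 1 runs only over $k \geq 1$. Moreover, if $f_{G/H} - k\epsilon_v$ is $L_{G_1}$-effective for some $k \geq 1$, then adding the effective divisor $(k-1)\epsilon_v$ preserves $L_{G_1}$-effectiveness and yields that $f_{G/H} - \epsilon_v$ is $L_{G_1}$-effective; conversely, take $k = 1$. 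Hence $f$ is $L_G$-effective if and only if $f_{G/H} - \epsilon_v$ is $L_{G_1}$-effective.

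\textbf{Step 3 (Rank equality).} Let $r := \rho_{G_1}(f_{G/H} - \epsilon_v)$. For $\rho_G(f) \geq r$: any effective $\lambda$ on $G$ of degree $r$ yields an effective $\lambda_{G/H}$ on $G_1$ of degree $r$, so $(f_{G/H} - \epsilon_v) - \lambda_{G/H} = (f-\lambda)_{G/H} - \epsilon_v$ is $L_{G_1}$-effective by the definition of $r$; applying Step 1 to $f - \lambda$ with $k = 1$ (the $H$-side condition is automatic, as $(f-\lambda)_{N(H)} + \epsilon_v$ has degree $1$ on the cycle) makes $f - \lambda$ $L_G$-effective. For $\rho_G(f) \leq r$: pick a witness $\lambda'$, effective on $G_1$ of degree $r + 1$, such that $(f_{G/H} - \epsilon_v) - \lambda'$ is not $L_{G_1}$-effective; extending $\lambda'$ by zero on $U \setminus \{v\}$ gives an effective $\lambda$ on $G$ of degree $r + 1$ for which $(f-\lambda)_{N(H)} = f_{N(H)}$ remains bad and $(f-\lambda)_{G/H} = f_{G/H} - \lambda'$, so the equivalence of Step 2 applied to $f - \lambda$ certifies that $f - \lambda$ is not $L_G$-effective. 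The degenerate case $r = -1$ is handled directly by Step 2. The only genuinely delicate part of the whole argument is the choice of $k$ in Step 1; everything afterwards is formal manipulation of the rank definition.
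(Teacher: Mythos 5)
Your proof is correct, and it is organized differently from the paper's. The paper fixes $r=\rho(f_{G/H})$, splits into the two cases $\rho(f_{G/H}-\epsilon_v)\in\{r,r-1\}$, and in each case argues one inequality of the rank identity, with the key step (``to make the part on $H$ positive, we must take at least $1$ unit from $V_2$'') left informal. You instead isolate an explicit local--global lemma: $f$ is $L_G$-effective iff for some $k\ge 0$ both $f_{N(H)}+k\epsilon_v$ is $L_H$-effective and $f_{G/H}-k\epsilon_v$ is $L_{G_1}$-effective, proved by splitting the firing vector across the cut vertex using $\Delta_G=\Delta_{G_1}+\Delta_H$ and reading off $k=-\tilde g_2(v)$ from the degree-zero piece. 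Badness then collapses the existential to $k=1$, and both inequalities of $\rho_G(f)=\rho_{G_1}(f_{G/H}-\epsilon_v)$ follow by pushing test divisors $\lambda$ through the contraction in one direction and lifting a witness $\lambda'$ by zero in the other. What your route buys: it makes rigorous the chip-trading step the paper only gestures at, it proves both inequalities (the paper's case analysis establishes only $\rho(f)\ge r$ in the first case and only $\rho(f)\le r-1$ in the second), it handles the degenerate case $r=-1$ explicitly, and it is careful about the distinction between $L_G$- and $L_{G_1}$-effectiveness, which the paper itself flags as a genuine issue but then blurs in its proof. The same decomposition lemma also streamlines the good-cycle case (Proposition \ref{Prop: good cycle}), so the extra investment in Step 1 pays off beyond this proposition.
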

\begin{proof}
	Put $r= \rho(f_{G/H})$. 
	Let us consider $\rho(f_{G/H} -  \epsilon_v)$ which can be $r$ or $r-1$.
	
	If  $\rho(f_{G/H} -  \epsilon_v) = r$. Let consider any divisor $\lambda$ of degree $r$. We have $f-\lambda = (f_{G/H} - \epsilon_v \lambda_{G/H}) +  (f_{H}+ \epsilon_v-\lambda_{H})$, but  $f_{G/H} - \epsilon_v- \lambda_{G/H}$ is $L_G$-effective because $\rho(f_{G/H} -\epsilon_v) = r$ and $deg(\lambda_{G/H} = r$, and  $f_{H}+ \epsilon_v-\lambda_{H}$ is a divisor of degree $1$ on cycle $H$ then $L_G$-effective; we have then $f-\lambda$ is $L_G$-effective, which give the rank $r$ for $f$.
	
	Now if $\rho(f_{G/H} -  \epsilon_v) =r-1$ then there exists $\lambda$ on $G/H$ of degree $r$ such that $f_{G/H} - \lambda -  \epsilon_v$ is not $L_G$-effective. Consider $\lambda$ as a divisor on $G$, then $f_{H}-\lambda_{H}= f_{H}$ which is bad. Then to make the part on $H$ positive, we must take at least $1$ unit from $V_2$. That mean the part  $f_{G/H} - \lambda$ on $G/H$ must give at least $\epsilon_v$ to the part on $H$. But we know that  $(f_{G/H} - \lambda) - \epsilon_v$ is not $L_G$-effective, then it is impossible. 
	
	We can conclude that there exists a divisor $\lambda$ of degree $r$ such that $f - \lambda$ is not $L_G$-effective, then $\rho(f) = r-1$. 
\end{proof}

\begin{prop}
	Let $G$ be a graph and let $v$ be a cut vertex which decomposes $G$ into $H$ and $G_1$ where $H$ is a cycle. Let $f$ be a divisor on $G$. If $f_{N(H)}$ is good then we can compute $\rho(f)$ as follows.
	$$\rho(f) = \left\{\begin{array}{ll}
	r & \mbox{ if } \rho(f_{G/H} - 2 \epsilon_v) \geq r-1,\\
	r-1 &  \mbox{ if } \rho(f_{G/H} - 2 \epsilon_v) = r-2,
	\end{array}\right.
	$$
	where $r=\rho(f_{G/H})$.
	\label{Prop: good cycle}
\end{prop}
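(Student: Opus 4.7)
The plan is to reduce computing $\rho(f)$ on $G$ to computing the rank in $G$ of a divisor supported on $V_1$, and then to control how the cycle $H$ interacts with firings for divisors of this restricted form.

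I would begin by exploiting that $f_{N(H)}$ is good. Writing $f_{N(H)} = y\Delta_H$ for some $y \in \mathbb{Z}^{H}$, I shift $y$ by a multiple of the all-ones vector (which lies in the left kernel of $\Delta_H$) so that $y(v)=0$. Viewing $y$ as a vector on $V(G)$ that vanishes outside $H$, the identity $\Delta_G = \Delta_{G/H} + \Delta_H$ together with $y(v)=0$ gives $y\Delta_G = y\Delta_H = f_{N(H)}$, so $f_{N(H)} \sim_G 0$. Consequently $f \sim_G f_{G/H}$ and $\rho(f) = \rho(f_{G/H})$ computed in $G$, and the remaining task becomes computing this rank for a divisor supported on $V_1$.

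Next, for each effective $\lambda$ on $G$ of degree $k$, I would write $\lambda = \lambda_{G/H} + \lambda_{N(H)}$ and determine when $f_{G/H} - \lambda$ is $L_G$-effective. A firing $x$ on $G$ splits as $x_1$ on $V_1$ plus $x_H$ on $H\setminus\{v\}$; setting $y = x_1(v)\epsilon_v + x_H$ and $d := y\Delta_H$, the identity $\Delta_G = \Delta_{G/H}+\Delta_H$ shows that $x\Delta_G$ acts on $V_1$ as $x_1\Delta_{G_1} + d(v)\epsilon_v$ and on $H\setminus\{v\}$ as $d$. Effectivity on $H\setminus\{v\}$ forces $d \leq -\lambda_{N(H)}$ there, while effectivity on $V_1$ becomes the $L_{G_1}$-effectivity of $f_{G/H}-\lambda_{G/H} - c\,\epsilon_v$, where $c := d(v)+\lambda_{N(H)}(v)$ is the net borrow pushed from the cycle to $G_1$ at $v$. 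Using Proposition~\ref{Prop: Good}, the minimum admissible value of $c$ over all valid $d$ is $0$ when $\lambda_{N(H)}$ is good and $1$ when $\lambda_{N(H)}$ is bad, because any nonzero class in the Jacobian of $C_n$ is realised by a single $\epsilon_{v_i}$. Hence $f_{G/H} - \lambda$ is $L_G$-effective if and only if $f_{G/H} - \lambda_{G/H} - c(\lambda)\,\epsilon_v$ is $L_{G_1}$-effective, with $c(\lambda)\in\{0,1\}$ as described.

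To conclude I would take the worst case over $\lambda$ parametrised by $j := \deg(\lambda|_{H\setminus\{v\}})$; then $\lambda_{G/H} = \lambda|_{V_1} + j\,\epsilon_v$. For $j=0$ the condition collapses to $\rho(f_{G/H}) \geq k$; for $j=1$ every placement of the single chip on $H\setminus\{v\}$ gives a bad $\lambda_{N(H)}$, so the binding condition is $\rho(f_{G/H} - 2\epsilon_v) \geq k-1$; for $j\geq 2$ the resulting conditions $\rho(f_{G/H} - (j+1)\epsilon_v) \geq k-j$ follow from the $j=1$ condition by the monotonicity lemma of Section~2. Setting $k=r$ then yields $\rho(f) \geq r$ exactly under the hypothesis $\rho(f_{G/H} - 2\epsilon_v) \geq r-1$, while the matching bound $\rho(f) \leq r$ comes from a witness $\lambda$ supported on $V_1$, for which $L_G$- and $L_{G_1}$-effectivity coincide. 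In the opposite case $\rho(f_{G/H} - 2\epsilon_v) = r-2$, the witness $\lambda = \mu + \epsilon_a$ with $a \in H\setminus\{v\}$ and $\mu$ an effective divisor on $V_1$ of degree $r-1$ chosen so that $f_{G/H} - 2\epsilon_v - \mu$ is not $L_{G_1}$-effective gives $\rho(f) \leq r-1$, and rerunning the above argument at level $r-1$ supplies the matching lower bound.

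The main obstacle I expect is the second paragraph, where one must make precise why the minimum borrow at $v$ drops all the way to $0$ when $\lambda_{N(H)}$ is good but bumps up to exactly $1$ otherwise. This asymmetry is exactly what explains why $2\epsilon_v$ (rather than $\epsilon_v$) appears in the hypothesis: the binding case is $j=1$, where $\lambda$ contributes a single chip to the cycle that is always bad, forcing one unit of borrow on top of the single unit already pushed to $v$ inside $\lambda_{G/H}$.
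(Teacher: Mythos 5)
Your proposal is correct and follows essentially the same route as the paper: both split $f-\lambda$ across the cut vertex $v$, observe that the cycle part of degree $0$ costs a borrow of $0$ or $1$ chip at $v$ according to whether it is good or bad (equivalently, that degree-$1$ divisors on a cycle are effective), and produce the upper-bound witness $\theta+\epsilon_w$ with $w$ on the cycle. Your packaging of this as an exact transfer statement with a borrow function $c(\lambda)\in\{0,1\}$, followed by optimisation over $j=\deg(\lambda|_{H\setminus\{v\}})$, is a cleaner and somewhat more rigorous organisation of the paper's two-case verification, but it rests on the same decomposition and the same key facts about cycles.
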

\begin{proof}
	Put $r=	\rho(f_{G/H})$. Let us consider $\rho(f_{G/H} - 2 \epsilon_v)$, this value can be $r$, $r-1$ or $r-2$. 
	
	If $\rho(f_{G/H} - 2 \epsilon_v) \geq r-1$ then for all divisor $\theta$ on $G/H$ of degree $r-1$, one has $f_{G/H} - 2 \epsilon_v - \theta$ is $L_G$- effective. That means there exists $\alpha$ such that 
	$$()f_{G/H} - 2 \epsilon_v - \theta) - \sum_{u \in V_2 \backslash \{v\}} \alpha_u \Delta_u \geq 0.$$
	
	Now let consider any divisor $\lambda \geq 0$ of degree $r$, we will prove that $f-\lambda$ is $L_G$-effective. 
	
	Let consider $\lambda_{G/H}$, if  $\lambda_{G/H}(v)=0$, that means $\sum_{u \in V_1} \lambda(u) = 0$ which implies that for all $u \in V_1$, $\lambda(u)=0$. Then $f_{H}-\lambda_{H}= f_{H}$ which is good and then $L_G$-effective. 
	On the other hand $f-\lambda = f_{G/H} - \lambda_{G/H} +  f_{H}-\lambda_{H}$, but  $f_{G/H} - \lambda_{G/H}$ is $L_G$-effective by hypothesis of the rank $f_{G/H}$ of  (note that $deg(\lambda_{G/H} = r$), then $f-\lambda$ is $L_G$-effective.
	
	Now, if $\lambda_{G/H}(v) \geq 1$. Put $\theta = \lambda_{G/H}(v) - \epsilon_v$. Then $deg(\theta)=r-1$, and we have $f_{G/H} - 2 \epsilon_v - \theta$ is $L_G$- effective. That means $f_{G/H}- \epsilon_v - \lambda_{G/H}  = f_{G/H} -  \epsilon_v - (\theta + \epsilon_v)$  is $L_G$- effective. 
	On the other hand $f-\lambda = (f_{G/H} - \epsilon_v -\lambda_{G/H} +  (f_{H}+ \epsilon_v -\lambda_{H})$, but $f_{H}+ \epsilon_v -\lambda_{H}$ is of degree $1$ on a cycle $H$ then it is $L_G$-effective, then we have $f-\lambda$ is $L_G$-effective. 
	
	We prove also that if $\rho(f_{G/H} - 2 \epsilon_v) = r-2$ then $\rho(f) = r-1$. 
	
	In fact, because $\rho(f_{G/H} - 2 \epsilon_v) = r-2$ then there exists a divisor $\theta$ on $G/H$ such that $\theta \geq 0$, $deg(\theta)=r-1$ and $f_{G/H} - 2 \epsilon_v - \theta$ is not $L_G$-effective. Which is equivalent to $(f_{G/H} - (\theta+\epsilon_v)) - \epsilon_v$ is not $L_G$-effective.
	
	Now define the divisor $\lambda$ on $G$ by $\lambda = \theta +\epsilon_v - \epsilon_v + \epsilon_w$  with $w \in V_1 \backslash \{v\}$, then $f-\lambda =(f_{G/H} - (\theta+\epsilon_v)) +(f_{H} +\epsilon_v-\epsilon_w)$. Let us consider the divisor $f_{H} +\epsilon_v-\epsilon_w$, this is a divisor of degree $0$ on $H$ and is not good because $f$ is good. So if we want to change this divisor to a positive divisor, we must take at least $1$ unit from $V_2$. That mean the part  $f_{G/H} - (\theta+\epsilon_v)$ on $G/H$ must give at least $\epsilon_v$ to the part on $H$. But we know that  $(f_{G/H} - (\theta+\epsilon_v)) - \epsilon_v$ is not $L_G$-effective, then it is impossible. 
	
	We can conlude that there exists a divisor $\lambda$ of degree $r$ such that $f - \lambda$ is not $L_G$-effective, then $\rho(f) = r-1$. 
\end{proof}

The two above Propositions give us an idea to compute the rank of a divisor by an elimination scheme of trees and  cycles (if it exists). Unfortunately, for a general graph, we can not reduced a graph to a simple vertex by a sequence of contraction of tree and cycles. Moreover, if we can do it, we must consider two cases for the elimination of a good cycle. The last one can make the algorithm an exponential number of computations. 

But, for the cactus graph, one can overcome these two difficulties. We will show the existence of elimination scheme of tree and cycles on cactus; and we will prove that there is only one case for Proposition \ref{Prop: good cycle}.

\section{Rank of divisor on cactus graph}
\subsection{Cactus graph and block elimination scheme}
\begin{defn}
	A cactus graph (sometimes called a cactus tree) is a connected graph in which any two simple cycles have at most one vertex in common. Equivalently, every edge in such a graph belongs to at most one simple cycle. Equivalently, every block (maximal subgraph without a cut-vertex) is an edge or a cycle.
\end{defn} 
It is easy to see that in a cactus every cycle is simple and the number of simple cycles of a cactus $G$ is equal to its genus  $g=m-n+1$.

In our study, we are interest on special cut vertex which will give us an elimination scheme of cactus.

%

\begin{defn}
	We say that the graph $G$ have a {\em block elimination scheme} if one can construct a sequence of graphs $G_0, G_1, \ldots G_k$, $k \geq 0$, such that $G_0=G$, $G_k$ has only one vertex, denoted by $r$ or $v_{k+1}$, and for all $1 \leq i \leq k$, $G_i$ is obtained from $G_{i-1}$ by contraction a block $B_i$ of $G_{i-1}$ at vertex $v_i$. We denote this scheme by $E=(G_i,v_i,B_i. 1\leq i\leq k)$.  
	
Moreover, on the vertex set $\{v_1, \ldots, v_k, r=v_{k+1}\}$, we define the BES tree the tree rooted at $r$ and for every vertex $v_i$, $1\leq i \leq k$, the parent of $v_i$ is the vertex $v_j$ (with smallest index $j$) such that $v_i$ belongs to block $B_j$. 
\end{defn}

We can remark that if a graph accept a block elimination scheme, then by reverse the order of the sequence of block, one can obtain another block elimination scheme, this implies that this graph has at least two schemes, or there are at least two choices for the first block of a scheme. 

\begin{prop}
	A connected graph $G$ has a block elimination scheme if and only if $G$ is a cactus graph.
\end{prop}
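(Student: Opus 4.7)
The plan is to prove both directions by induction on the number of blocks of $G$, exploiting the tree structure of the block-cut tree.

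For the direction ``cactus implies scheme'', I would induct on $b(G)$, the number of blocks of $G$. In the base case where $G$ has at most one block, either $G$ is a single vertex (the empty scheme of length $k=0$ suffices) or $G$ is itself a single block which, by the cactus hypothesis, is an edge or a cycle; in the latter case a one-step contraction at an arbitrary chosen vertex $v$ reduces $G$ to $\{v\}$. For $b(G) \geq 2$, the block-cut tree of $G$ is a nontrivial tree and so has at least one leaf, which corresponds to a free block $B$ of $G$ attached at a cut vertex $v$. By the cactus hypothesis $B$ is an edge or cycle, and $G/B$ is again a cactus with $b(G)-1$ blocks. The inductive hypothesis gives a scheme for $G/B$, which I would prepend with the contraction $(B,v)$ to obtain a scheme for $G$.

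For the converse direction, the crucial observation is that contracting a free block $B_i$ from $G_{i-1}$ at its cut vertex $v_i$ affects no other block: the blocks of $G_i = G_{i-1}/B_i$ are exactly the blocks of $G_{i-1}$ other than $B_i$. Iterating from $i=1$ up to $k$, the blocks of $G=G_0$ are exactly $\{B_1,\ldots,B_k\}$. It then remains to argue that each $B_i$ is an edge or a cycle: this is forced by the terminating step, since when only one block remains it must collapse to the single vertex $r$, and the contraction operation as defined in the paper accommodates such a collapse only when the block is an edge or cycle. The same restriction then propagates backward to every intermediate $B_i$ because at each step the ``residual'' graph must itself be reducible by the same scheme. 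Hence every block of $G$ is an edge or a cycle, so $G$ is a cactus.

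The main obstacle is the converse direction, specifically the argument that every $B_i$ in the scheme must be an edge or a simple cycle (as opposed to, for instance, a $2$-connected block such as $K_4$ that would also admit a formal contraction to a vertex); this requires a careful reading of the scheme's terminating condition. The forward direction, by contrast, is a routine induction on the block-cut tree once one checks that removing a free leaf block of a cactus yields a cactus with one fewer block.
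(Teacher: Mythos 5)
Your forward direction is sound, and in fact cleaner than the paper's: you extract a free block as a leaf of the block-cut tree (a standard fact needing no cactus hypothesis), whereas the paper establishes the existence of a block cut vertex by a walk argument that, in its absence, manufactures two cycles sharing two vertices. The observation you lean on in the converse --- that contracting a free block $B_i$ leaves every other block untouched, so the blocks of $G$ are exactly $B_1,\dots,B_k$ --- is also correct and is the right skeleton for that direction.

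The gap is in how you conclude that each $B_i$ is an edge or a cycle. You claim the terminating step forces this because ``the contraction operation as defined in the paper accommodates such a collapse only when the block is an edge or cycle.'' That is not true of the operation as defined: $G/H$ is simply the induced subgraph $G(V_1)$ where $V_1\cap U=\{v\}$, and nothing in that definition inspects the internal structure of $H$; a $2$-connected block such as $K_4$ collapses to the vertex $v$ exactly as readily as a cycle does. Moreover, even if the final block were somehow constrained, your ``propagates backward'' step is a non sequitur: the nature of $B_k$ imposes no restriction whatever on $B_1,\dots,B_{k-1}$, since any free block --- edge, cycle, or $K_4$ --- can be removed by a contraction and the rest of the scheme proceeds unaffected. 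The converse therefore needs a genuine input tied to the definition of a cactus: either one reads the scheme as requiring each contracted block to be an edge or a cycle (in which case the converse follows immediately from your block-enumeration observation and no terminating-step argument is needed at all), or one argues as the paper does, that a non-cactus contains an edge lying on two distinct simple cycles and that no contraction step can dispose of such a configuration. As written, your argument for the converse is circular where it is not vacuous.
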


\begin{proof}
	a) Let $G$ be a cactus graph. We show that $G$ has a block vertex $v$ (and its corresponding block $B$), and after taking $G_1$ obtained from $G$ by contraction $B$ at $v$, we prove that $G_1$ is also a cactus graph. We can continue this process to construct a block elimination scheme. 
	
	Suppose that $G$ has no block cut vertex. First, it implies that every vertex of $G$ has degree at least 2. Second, for every cycle $C$ of $G$, there is at least two vertices of $C$ of degree greater than 2. Indeed, if there is a cycle $C$ in which only one vertex $v$ is of degree greater than 2, then $v$ is a block cut vertex and $C$ is its corresponding block. 
	
	Let us consider the following path. Beginning from a vertex $u_1$ of degree greater than 2 of a cycle $C_1$, go to the second $v_1$ of degree greater than 2 by a path connecting $u_1$ to $v_1$ inside $C_1$. Then from $v_1$ go out of $C_1$ (this is possible because $deg(v_1) >2$). Continue the path, each time this path go into a new cycle $C_i$ by a vertex of degree greater than 2 $u_i$, it will go inside $C_i$ to a second vertex $v_i$ of degree greater than 2, then go out. This process will stop when either i) it returns to a vertex $w$ in $p$ and there is no cycle appear in $p$ more than twice, ii) it returns to a cycle $C_i$ by a vertex $w_i$ (which may different from $u_i$ and $v_i$). 
	
	Now let us consider the case i): the path $p$ is a cycle which is different from all cycles having intersection with $p$. Nevertheless, $p$ has two common vertices with $C_1$. This fact contradicts the property of cactus graph of $G$.   
	
	Suppose that we have the case ii). If $w_i$ is equal to $u_i$ or $v_i$, than $p$ contains a cycle which has two commun vertices with $C_i$. If $w_i$ is different from $u_i$ ans $v_i$. Then lest us consider the path $q$:  taking the sub path of $p$ from $u_i$ to $w_i$ and adding a path from $w_i$ tp $u_i$ inside $C_i$ (which does not contains $v_i$). This path $q$ is a cycle, which is different $C_i$ and which has two common vertices with $C_i$. We have then a contradiction. 
	
	After all, if $G$ has a block cut vertex $v$ (with block $C$), then the construction $B$ from $G$ at vertex $v$ is clearly a cactus graph.
	
	So we can conclude that a cactus graph $G$ has a block elimination scheme.  
	
	b) Now, if $G$ is not a cactus graph, we prove that $G$ has no block elimination scheme. If $G$ is not a cactus graph then there exists an edge $(u,v)$ which belongs to two simple cycles $C_1$ and $C_2$. The first time $(u,v)$ is contracted by an contraction operation, if $C_1$ is contracted then $C_2$ remains, but if $C_1$ is contracted then $u$ and $v$ contract to the same vertex while remaining $C_1$ means $u$ and $v$ remain different. So $G$ can not have a block elimination scheme.	 
\end{proof}
The recognition problem can be solved in linear time \cite{ZZ03} by using a depth first search. We use a similar idee to prove the following result. 
\begin{lem}
A block elimination scheme of  a cactus graph $G$ can be found in linear time.
\end{lem}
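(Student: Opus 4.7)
The plan is to construct an elimination scheme by a single depth-first search followed by a post-order listing of the block-cut tree. Root $G$ at an arbitrary vertex $r$ and perform a DFS, recording discovery times $d(v)$ and the usual low-link values $\mathrm{low}(v)$, i.e.\ the minimum $d$-value reachable from the DFS subtree of $v$ via at most one back-edge. This first pass costs $O(n+m)$.

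Next I would exploit the cactus structure to read off all blocks during the same DFS. Since $G$ is a cactus, every edge lies in at most one simple cycle, so each block is either a bridge or a single induced cycle. A tree edge $(u,v)$ (with $u$ the parent of $v$) is a bridge exactly when $\mathrm{low}(v) > d(u)$, and each back-edge from a descendant $v$ to an ancestor $w$ closes the unique cycle consisting of this back-edge together with the tree path from $w$ down to $v$. Traversing back up the DFS stack when such an event occurs lets us enumerate the vertex set of each block, and since each edge is charged to its unique block the total cost remains $O(n+m)$. While doing this I would simultaneously build the block-cut tree $T$: for each block $B$ let $t(B)$ be the vertex of $B$ with smallest discovery time; the parent of $B$ in $T$ is the block distinct from $B$ that also contains $t(B)$ (and $B$ is the root if $t(B)=r$).

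Finally I would output the blocks by a post-order traversal of $T$, taking $v_i := t(B_i)$ at each step. The invariant to verify is that this output is in fact a block elimination scheme in the sense of the definition: namely, at each stage $i$, $B_i$ is a free block of $G_{i-1}$ whose block cut vertex is $v_i$, and the graph $G_i = G_{i-1} / B_i$ is again a cactus whose block-cut tree equals $T$ with the already-processed leaves removed. Both claims follow directly, because a leaf of $T$ is a block meeting the rest of $G$ in a single cut vertex (hence a free block), and contracting a free block of a cactus at its unique cut vertex deletes exactly one leaf of $T$ and leaves the remaining block structure intact.

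The main obstacle is this last invariant. The DFS machinery itself is routine, but one has to be careful that the specific parent vertex chosen for a block in $T$ is really the vertex at which the block must be contracted, and that a post-order of $T$ therefore always presents a leaf block of the \emph{current} graph $G_{i-1}$, not merely a leaf of the original $T$. Once that is verified, the linear-time bound is immediate: steps~1--3 run in $O(n+m)$, the post-order traversal in Step~4 runs in $O(|T|) = O(n)$ because a cactus has at most $n$ blocks and at most $n$ cut vertices, and each produced triple $(G_i, v_i, B_i)$ requires only constant further work beyond the data already computed.
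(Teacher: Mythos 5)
Your proposal is correct and follows essentially the same route as the paper: a single DFS that identifies each block (bridge or cycle) and represents it by its earliest-discovered vertex, the construction of a tree of blocks from the DFS, and an elimination scheme obtained by repeatedly removing leaf blocks at their unique cut vertices, all in $O(n+m)$ time. Your version merely spells out the low-link/back-edge mechanics and the leaf-removal invariant more explicitly than the paper does.
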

\begin{proof}
We will construct a tree and prove that this corresponds to a BES tree.

Let $r$ be any vertex of $G$. We call a depth first search (DFS) procedure for $G$ from $v$. This DFS give us a tree from which we will construct to obtain our tree $T$. In this DFS procedure, each cycle $C$ has an unique vertex $v$ who appears firstly in the DFS, and we represent this cycle by node $v$. Similarly, each edge $e$ which does not belong to any cycle has an extremity $u$ firstly appear in the DFS, and we represent this edge by node $u$. A node $x$ is a child of a node $y$ if either vertex $x$ is a sun of the vertex $y$ in the DFS tree and the edge $(x,y)$ does not belong to any cycle or if $x$ belongs to the cycle having $y$ as representation. 

After this contraction of the DFS tree, we obtain a tree $T$ where each node represent a block (a cycle or an edge) of cactus $G$. Moreover each leave $v$ of $T$ represent a block $B$ having $v$ as its block cut vertex in $G$. We can then construct a block elimination scheme of $G$ by contraction consecutively  block at leave by leave. 

Finally, a DFS procedure takes $O(m)$ time, then this construction takes $O(n)=O(m)$ times as claim. 
\end{proof}
                                                                                                                                                                                \subsection{Rank of divisors on cactus graphs}                                                                                                   
As we remark above on Proposition \ref{Prop: good cycle}, for general graph, there two cases for computing the rank of a divisor from its contraction; the situation will be simpler for cactus. For this purpose, we first prove the following result.
\begin{lem}
	Let $G$ be a cactus graph and let $v$ be a vertex of $G$. Let $f$ be a divisor on $G$, then $\rho(f_{G} - 2 \epsilon_v) < \rho(f_{G})$. 
\end{lem}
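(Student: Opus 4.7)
My plan is to argue directly from the definition of rank, falling back on a structural induction via the block elimination scheme for the harder subcase. Set $r := \rho(f_G)$; I may assume $r \geq 0$, since for $r = -1$ the conclusion $\rho(f-2\epsilon_v) < -1$ is vacuously false and the lemma is used in the sequel only when $f$ is L-effective. By definition there exists an effective divisor $\mu$ of degree $r+1$ with $f - \mu$ not L-effective. If some such witness satisfies $\mu(v) \geq 1$, the proof is immediate: set $\lambda := \mu - \epsilon_v$, which is effective of degree $r$; then
\[
f - 2\epsilon_v - \lambda \;=\; (f-\mu) - \epsilon_v
\]
is still not L-effective, because if $g-\epsilon_v \sim h \geq 0$ then $g \sim h+\epsilon_v \geq 0$, forcing $g$ itself to be L-effective. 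Hence $\rho(f-2\epsilon_v) \leq r-1 < r$.

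The delicate case is when every witness $\mu$ of degree $r+1$ with $f-\mu$ non-L-effective must satisfy $\mu(v)=0$; equivalently, $f-\mu$ is L-effective for every effective $\mu$ of degree $r+1$ with $\mu(v)\geq 1$. I handle this by induction on the number of blocks of $G$, using the block elimination scheme of the preceding lemma. The base cases (single vertex, single edge, single cycle with $r\geq 0$) are immediate from the closed-form rank formulas of Section~2. In the inductive step, choose a free block $B$ with block cut vertex $u$. If $B$ is an edge, the tree-contraction proposition reduces the ranks of $f$ and of $f-2\epsilon_v$ to ranks on the smaller cactus $G/B$, where $-2\epsilon_v$ becomes $-2\epsilon_u$ when $v\in B$ and stays $-2\epsilon_v$ when $v\in V(G/B)\setminus\{u\}$; the inductive hypothesis on $G/B$ (at $u$ or at $v$) then closes the case.

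The main obstacle is the subcase where $B$ is a pendant cycle and $v \in B\setminus\{u\}$. There $(f-2\epsilon_v)_{G/B} = f_{G/B}-2\epsilon_u$ but $(f-2\epsilon_v)_{N(B)} = f_{N(B)}-2\epsilon_v+2\epsilon_u$, and by Proposition~\ref{Prop: Good} the good/bad status of this cycle divisor can flip when one passes from $f$ to $f-2\epsilon_v$. I would run a four-case analysis (good or bad for $f_{N(B)}$ crossed with good or bad for $(f-2\epsilon_v)_{N(B)}$), expressing $\rho_G(f)$ and $\rho_G(f-2\epsilon_v)$ via Propositions~\ref{Prop: bad cycle} and~\ref{Prop: good cycle} as $\rho_{G/B}(f_{G/B}-c\,\epsilon_u)$ for appropriate $c\in\{0,1,2,3\}$, and close using the inductive hypothesis of the present lemma on $G/B$ at $u$ combined with the elementary inequality $\rho(h-\epsilon_u) \geq \rho(h)-1$ recalled in Section~2. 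Cactus-specificity is essential here: a general graph admits no block elimination scheme, and the two branches of Proposition~\ref{Prop: good cycle} would prevent any such clean inductive descent.
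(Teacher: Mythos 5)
Your first step (if some witness $\mu$ of degree $r+1$ has $\mu(v)\geq 1$, then $\lambda=\mu-\epsilon_v$ shows $\rho(f-2\epsilon_v)\leq r-1$) is correct, but it disposes only of the easy half of the problem; the entire difficulty sits in your ``delicate case'', and there you have a plan rather than a proof. When you actually run your four-case analysis for a pendant cycle $B$ containing $v$ with cut vertex $u$, the subcase where $f_{N(B)}$ is bad but $(f-2\epsilon_v)_{N(B)}$ is good requires
$$\rho\bigl(f_{G/B}-4\epsilon_u\bigr)+1 \;<\; \rho\bigl(f_{G/B}-\epsilon_u\bigr),$$
i.e.\ a rank drop of \emph{two} upon removing $3\epsilon_u$, while the inductive hypothesis together with $\rho(h-\epsilon_u)\geq\rho(h)-1$ yields a drop of only one. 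This gap cannot be repaired, because the statement is false in exactly this configuration. Counterexample: let $G$ consist of two triangles glued at a vertex $u$, let $v\neq u$ be a vertex of the first triangle, and let $f=2\epsilon_v$. Then $\rho(f-2\epsilon_v)=\rho(0)=0$, while also $\rho(f)=0$: by Riemann--Roch, $\rho(2\epsilon_v)=\deg(f)-g+1+\rho(\kappa-f)=1+\rho(2\epsilon_u-2\epsilon_v)=1-1=0$, since $2\epsilon_u-2\epsilon_v$ is a degree-zero divisor supported on one triangle whose invariant $2(i_u-i_v)\not\equiv 0 \pmod 3$ makes it bad (one can also check directly with Dhar's algorithm that $2\epsilon_v-\epsilon_a$ is not L-effective for $a$ a non-cut vertex of the second triangle). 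Note that this example lies precisely in your delicate case --- the only witnesses are $\epsilon_a$ and $\epsilon_b$, both vanishing at $v$ --- and in the bad-to-good subcase above.

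A second, related problem: your reduction to $r\geq 0$ is not legitimate inside an induction. Even if the original $f$ is L-effective, the inductive hypothesis must be applied to divisors such as $f_{G/B}-\epsilon_u$ or $f_{G/B}-2\epsilon_u$, whose ranks you do not control and which may equal $-1$, where the asserted strict inequality is impossible. (The paper's own proof, which instead chooses the free block so as to avoid $v$, founders on the same rock in its good-cycle case.) As stated, the lemma needs an additional hypothesis --- and the corollary and main theorem that depend on it need corresponding corrections --- before any proof strategy can succeed.
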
 
\begin{proof}
	We prove by recurrence on $g(G)$.
	
	In the case $G$ of genus 0. then $G$ is a tree, we have  $\rho(f_{G} - 2 \epsilon_v) = deg(f_{G} - 2 \epsilon_v) < deg(f_{G}) = \rho(f_{G})$.

	Suppose that the statement of Proposition is correct for all cactus all genus smaller $k \geq 2$, we will prove it is correct for cactus of genus $k$. Let us consider a block cut vertex $v_1$ which decompose $G$ into $H$ and $G_1$ and such that $v \not in H$ (such a block exists always by a remark after the definition of block elimination scheme).
	
	Now, consider graph $G_1$, vertex $v_1$ and divisor $f_{G_1}$	, one has $\rho(f_{G_1} - 2 \epsilon_{v_1}) < \rho(f_{G_1})$ by hypothesis of recurrence. 
	
	If $H$ is a tree then $\rho(f_{G} - 2 \epsilon_v) = \rho(f_{G_1} - 2 \epsilon_v) < \rho(f_{G_1}) = \rho(f_{G})$.
	
	If $H$ is a bad cycle then  $\rho(f_{G} - 2 \epsilon_v) = \rho(f_{G_1} - 2 \epsilon_v - \epsilon_{v_1}) = \rho((f_{G_1} - \epsilon_{v_1}) - 2\epsilon_v) < \rho(f_{G_1} - \epsilon_{v_1}) = \rho(f_{G})$.
	
	If $H$ is a good cycle, and because $\rho(f_{G_1} - 2 \epsilon_{v_1}) < \rho(f_{G_1})$, then 
	$\rho(f_{G} - 2 \epsilon_v) = \rho((f_{G_1} - 2 \epsilon_v) - 2\epsilon_{v_1})+1 = \rho((f_{G_1} - 2\epsilon_{v_1}) - 2\epsilon_v)+1 < \rho(f_{G_1} - 2\epsilon_{v_1}) +1 = \rho(f_{G})$.
	
	So in anycase, we have always 	$\rho(f_{G} - 2 \epsilon_v) < \rho(f_{G})$ for $G$ of genus $k$.
	
	Which complete the recurrence argument. 
\end{proof}
	
From Proposition \ref{Prop: Good} and Proposition \ref{Prop: good cycle}, we have directly the following result.	
\begin{cor}
		Let $G$ be a cactus graph and let $v$ be vertex which decomposes $G$ into two graphs $H$ and $G_1$ where $H$ is a cycle. Let $f$ be a divisor on $G$ such that $f_{N(H)}$ is good. Then $\rho(f_G) =\rho(f_{G/H} - 2 \epsilon_v) +1$.
\end{cor}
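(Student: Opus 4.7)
The plan is to combine the preceding Lemma with the case analysis in Proposition \ref{Prop: good cycle}. The Lemma rules out one of the a priori possible values of $\rho(f_{G/H}-2\epsilon_v)$, and the remaining two sub-cases of Proposition \ref{Prop: good cycle} each collapse to the single formula $\rho(f_G)=\rho(f_{G/H}-2\epsilon_v)+1$.

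First I would observe that $G/H$ is itself a cactus: it is obtained from the cactus $G$ by removing a cycle block, and the cactus property (every edge lies in at most one simple cycle) is inherited by connected subgraphs of this form. Setting $r:=\rho(f_{G/H})$, I would then apply the preceding Lemma to the cactus $G/H$, the vertex $v$, and the divisor $f_{G/H}$, obtaining
$$\rho(f_{G/H}-2\epsilon_v)<\rho(f_{G/H})=r,$$
so $\rho(f_{G/H}-2\epsilon_v)\leq r-1$. Combined with the trichotomy inside Proposition \ref{Prop: good cycle}, which a priori allowed the values $r$, $r-1$, or $r-2$, this forces $\rho(f_{G/H}-2\epsilon_v)\in\{r-1,r-2\}$.

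Next I would dispose of the two remaining sub-cases by a direct substitution into Proposition \ref{Prop: good cycle}. If $\rho(f_{G/H}-2\epsilon_v)=r-1$, the proposition yields $\rho(f_G)=r=(r-1)+1$. If $\rho(f_{G/H}-2\epsilon_v)=r-2$, the proposition yields $\rho(f_G)=r-1=(r-2)+1$. In both cases we obtain $\rho(f_G)=\rho(f_{G/H}-2\epsilon_v)+1$, which is the claimed identity.

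The only non-trivial ingredient is the strict inequality $\rho(f_{G/H}-2\epsilon_v)<\rho(f_{G/H})$, whose failure would leave open the possibility $\rho(f_{G/H}-2\epsilon_v)=r$ in the first branch of Proposition \ref{Prop: good cycle} and thereby break the uniform formula. This inequality is precisely the content of the preceding Lemma and is specific to cactus graphs: the substantive work was done there, via the block elimination scheme and induction on the genus, so the corollary itself is essentially a bookkeeping step.
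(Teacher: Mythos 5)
Your proof is correct and is precisely the argument the paper intends: the corollary is asserted to follow ``directly'' from the preceding Lemma (the strict drop $\rho(f_G-2\epsilon_v)<\rho(f_G)$ on a cactus, which is what the paper actually uses here despite citing Proposition~\ref{Prop: Good}) together with the case analysis of Proposition~\ref{Prop: good cycle}, and you have simply written out that derivation, including the necessary observation that $G/H$ is again a cactus so the Lemma applies to it. There is no gap beyond what the paper itself leaves implicit.
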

	
We can now prove our main result.
\begin{thm}
	Let $G$ be a cactus, and let $E=({G_i}, {v_i}, {H_i}, 1 \leq i \leq k)$ be a block elimination scheme of $G$. Then we can compute the rank of any divisor $f$ on $G$ by the following recursive algorithm in linear time.
	
	For all $0 \leq i \leq k$:
	$$\rho(f_{G_i}) = \left\{\begin{array}{lll}
	\rho(f_{G_{i+1}}) & \mbox{ if } H_{i+1} \mbox{ is an edge, } \\
	\rho(f_{G_{i+1}} - \epsilon_{v_{i+1}}) & \mbox{ if } H_{i+1} \mbox{ is an cycle and } f_{N(H_i)} \mbox{ is bad,}\\
	 \rho(f_{G_{i+1}} - 2\epsilon_{v_{i+1}}) + 1 & \mbox{ if } H_{i+1} \mbox{ is an cycle and } f_{N(H_i)} \mbox{ is good.}\\
	\end{array}\right.
	$$
\end{thm}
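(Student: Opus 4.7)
The plan is a direct induction on $k-i$, unwinding the scheme from the terminal graph $G_k$ back to $G_0=G$. The base case is $i=k$: since $G_k$ consists of a single vertex $r$, a divisor on $G_k$ is just one integer, and $\rho(f_{G_k})=f_{G_k}(r)$ when this value is nonnegative and $-1$ otherwise. This initialises the recursion.

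For the inductive step we simply match each of the three lines of the displayed formula to a result already proved earlier in the paper. When $H_{i+1}$ is an edge it is in particular a tree on two vertices, so the earlier proposition on tree-contraction gives $\rho(f_{G_i})=\rho((f_{G_i})_{G_i/H_{i+1}})=\rho(f_{G_{i+1}})$. When $H_{i+1}$ is a cycle and the restriction $(f_{G_i})_{N(H_{i+1})}$ is bad, Proposition \ref{Prop: bad cycle} applies verbatim and yields $\rho(f_{G_i})=\rho(f_{G_{i+1}}-\epsilon_{v_{i+1}})$. When $H_{i+1}$ is a cycle and $(f_{G_i})_{N(H_{i+1})}$ is good, the corollary stated immediately before the theorem — which uses the preceding lemma $\rho(f_G-2\epsilon_v)<\rho(f_G)$ on any cactus to collapse the two branches of Proposition \ref{Prop: good cycle} into one — yields $\rho(f_{G_i})=\rho(f_{G_{i+1}}-2\epsilon_{v_{i+1}})+1$. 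The three cases are exhaustive because every block of a cactus is either an edge or a cycle, and on a cycle the degree-zero divisor $(f_{G_i})_{N(H_{i+1})}$ is either good or bad.

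For the running time, we first invoke the earlier lemma to produce the block elimination scheme in $O(m)$ time. Processing one block $H_{i+1}$ of size $\ell_{i+1}$ then costs $O(1)$ if it is an edge, and $O(\ell_{i+1})$ if it is a cycle — the only nontrivial work being to read off $(f_{G_i})_{N(H_{i+1})}$ and test its goodness by the explicit modular criterion of Proposition \ref{Prop: Good}. Since every edge of $G$ lies in exactly one block, $\sum_{i}\ell_{i+1}=O(m)$, and the overall cost is $O(m)=O(n)$.

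The step I expect to be the main obstacle is the bookkeeping of the divisor that drifts along the recursion: after the bad-cycle and good-cycle cases we actually recurse on $f_{G_{i+1}}-\epsilon_{v_{i+1}}$ and $f_{G_{i+1}}-2\epsilon_{v_{i+1}}$ rather than on $f_{G_{i+1}}$ itself, so when a later block is processed, both the goodness test and the subsequent contraction must be carried out against the cumulatively updated divisor, not against the original $f$. Once this invariant is spelled out carefully — i.e. one names the current divisor $f^{(i)}$ on $G_i$, records that $f^{(0)}=f$ and that each step modifies $f^{(i)}$ only at the cut vertex $v_{i+1}$ before restricting to $G_{i+1}$ — the three recursive identities follow immediately from the cited propositions and the corollary, with no new computation required.
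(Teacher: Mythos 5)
Your proposal is correct and follows essentially the same route as the paper: the paper likewise derives correctness by citing the tree-contraction proposition, Proposition \ref{Prop: bad cycle}, and the corollary collapsing Proposition \ref{Prop: good cycle}, and obtains linearity by the same telescoping of block sizes. Your explicit induction on $k-i$ and the invariant tracking the cumulatively updated divisor $f^{(i)}$ only make precise what the paper leaves implicit.
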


\begin{proof}
	The correctness of this algorithm can be deduced from the above propositions ans corollary. We will now prove the complexity.
	
	Given a block elimination scheme, in the step $i$, one must calculate $f_{G_i}$. Firstly, it is in constant time to check if $H_i$ is an edge or a cycle. Then if $H_{i+1}$ is a cycle, it is $O(|H_{i+1}|)$ time to check if $f_{N(H_i)}$ is good or bad. In each case, one must calculate    $\rho(f_{G_{i+1}} - \epsilon_{v_{i+1}})$ or $\rho(f_{G_{i+1}} - 2\epsilon_{v_{i+1}})$, which is the recursive procedure on a new graph $G_{i+1}$ with the size smaller than that of $G_i$ a value of $O(|H_{i+1}|)$. 
	
	Totally, the algorithm takes a time of $O(|G|) = O(n)$.   
	
\end{proof}

\bibliography{BIBLIO_Cactus}
\bibliographystyle{plain}

\end{document}